 \newtheorem{ittheorem}{Theorem}
 \newtheorem{itlemma}{Lemma}
 \newtheorem{itproposition}{Proposition}
 \newtheorem{itdefinition}{Definition}
 \newtheorem{itremark}{Remark}
 \newtheorem{itclaim}{Claim}
 \newtheorem{itcorollary}{\bf Corollary}
 \newenvironment{theorem}{\addtocounter{equation}{1}
 \begin{ittheorem}}{\end{ittheorem}}
 \newenvironment{lemma}{\addtocounter{equation}{1}
 \begin{itlemma}}{\end{itlemma}}
 \newenvironment{proposition}{\addtocounter{equation}{1}
 \begin{itproposition}}{\end{itproposition}}
 \newenvironment{definition}{\addtocounter{equation}{1}
 \begin{itdefinition}}{\end{itdefinition}}
 \newenvironment{remark}{\addtocounter{equation}{1}
 \begin{itremark}}{\end{itremark}}
 \newenvironment{claim}{\addtocounter{equation}{1}
 \begin{itclaim}}{\end{itclaim}}
 \newenvironment{proof}{\noindent {\bf Proof.\,}
 }{\hspace*{\fill}$\qed$\medskip}
 \newenvironment{corollary}{\addtocounter{equation}{1}
 \begin{itcorollary}}{\end{itcorollary}}
 \newcommand{\be}[1]{\begin{eqnarray*}\label{#1}}
 \newcommand{\ee}{\end{eqnarray*}}
 \newcommand{\bl}[1]{\begin{lemma}\label{#1}}
 \newcommand{\el}{\end{lemma}}
 \newcommand{\br}[1]{\begin{remark}\label{#1}}
 \newcommand{\er}{\end{remark}}
 \newcommand{\bt}[1]{\begin{theorem}\label{#1}}
 \newcommand{\et}{\end{theorem}}
 \newcommand{\bd}[1]{\begin{definition}\label{#1}}
 \newcommand{\ed}{\end{definition}}
 \newcommand{\bcl}[1]{\begin{claim}\label{#1}}
 \newcommand{\ecl}{\end{claim}}
 \newcommand{\bp}[1]{\begin{proposition}\label{#1}}
 \newcommand{\ep}{\end{proposition}}
 \newcommand{\bc}[1]{\begin{corollary}\label{#1}}
 \newcommand{\ec}{\end{corollary}}
 \newcommand{\bpr}{\begin{proof}}
 \newcommand{\epr}{\end{proof}}
 \newcommand{\bi}{\begin{itemize}}
 \newcommand{\ei}{\end{itemize}}
 \newcommand{\ben}{\begin{enumerate}}
 \newcommand{\een}{\end{enumerate}}
\def\uro{\smash{{U}^{\!\!\!\!\raise5pt\hbox{$\scriptstyle o$}}}}
\def\bp{{\overline{p}}}
\def\bp{{\overline{p}}}
 \def \ba {\begin{array}}
 \def \ea {\end{array}}
 \def \qed {{\heartsuit\hfill}}
 \def \N {{\mathbb N}}
\def \qed {{\square\hfill}}
\def \qed {{\square\hfill}}
\def\N{{\mathbb N}}
\def\eqref#1{(\ref{#1})}
\def\card{\text{card}\,}
\begin{document}
\allowdisplaybreaks[4]

\title{A probabilistic proof of Perron's theorem}

\author{
\qquad Rapha\"el Cerf \hskip 70pt Joseba Dalmau\\
{\small DMA, {\'E}cole Normale Sup\'erieure \qquad CMAP, Ecole Polytechnique}
}

\maketitle



\begin{abstract}
\noindent
We present an alternative proof of Perron's theorem, which is probabilistic in nature.
It rests 
on the representation of the Perron eigenvector as
a functional of the trajectory of an auxiliary Markov chain. This formula generalises the well--known
formula for the invariant probability measure of a finite state space Markov chain.
\end{abstract}



\bigskip

\noindent
In 1907, Oskar Perron proved the following theorem.

\noindent
\begin{theorem}
Let $A$ be a square matrix with positive entries.
Then 
the matrix $A$ admits a positive eigenvalue $\lambda$ such that:


\noindent
\phantom{i}i) 
to $\lambda$ is associated an 
eigenvector $\mu$
whose components are all positive;

\noindent
\phantom{i}ii) if $\alpha$ is another eigenvalue of $A$, possibly complex, then $|\alpha|< \lambda$;

\noindent
\phantom{}iii) any other eigenvector associated to $\lambda$ is a multiple of $\mu$.

\end{theorem}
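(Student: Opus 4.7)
Let $d$ denote the size of $A$. I would associate with $A$ a natural stochastic kernel and recover the Perron pair $(\lambda,\mu)$ through ergodic properties of the corresponding Markov chain. Since the entries $a_{ij}$ are strictly positive, the row sums $s_i=\sum_j a_{ij}$ are positive, and $p_{ij}=a_{ij}/s_i$ defines an irreducible stochastic matrix $P$ on $\{1,\dots,d\}$ satisfying a one-step Doeblin minorization. Writing $(X_n)_{n\ge 0}$ for the chain with kernel $P$ and $\E_i$ for the expectation conditional on $X_0=i$, the iterates of $A$ admit the path representation $(A^n)_{ij}=\E_i\bigl[\mathbf{1}_{\{X_n=j\}}\prod_{k=0}^{n-1}s_{X_k}\bigr]$, which is the bridge between the spectral problem and the chain.

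The first step is to construct the eigenvector. Fix a strictly positive vector $v$ and set $v^{(n)}=A^nv/\|A^nv\|_1$; to show that $v^{(n)}$ converges to some strictly positive $\mu$, I would couple two copies of the chain started from distinct states $i$ and $j$, which by the Doeblin property become equal in one step with probability at least $\theta:=\min_{i,j}p_{ij}>0$, and express the projective distance between two columns of $A^n$ in terms of the coupling time. This yields a geometric contraction of the projective distance, hence $v^{(n)}\to\mu$ with $\mu>0$. Setting $\lambda:=\|A\mu\|_1$ and passing to the limit in $Av^{(n)}/\|Av^{(n)}\|_1=v^{(n+1)}$ gives $A\mu=\lambda\mu$, which proves (i).

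For the remaining parts I would change measure by the $h$-transform $\hat p_{ij}=a_{ij}\mu_j/(\lambda\mu_i)$, which is itself an irreducible stochastic matrix on $\{1,\dots,d\}$. Any strictly positive eigenvector of $A$ for $\lambda$ would, after the substitution $w_j=\mu_j z_j$, produce an $\hat P$-harmonic function $z$, forced to be constant by recurrence of an irreducible finite chain; this yields (iii). For (ii), the same substitution converts an eigenvector of $A$ with eigenvalue $\alpha$ into an eigenvector of $\hat P$ with eigenvalue $\alpha/\lambda$, and the Doeblin contraction of $\hat P$ forces every non-trivial eigenvalue of $\hat P$ to have modulus strictly less than $1$, whence $|\alpha|<\lambda$.

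The main obstacle is producing the functional representation of $\mu$ without already knowing $\mu$: the stochastic matrix most naturally attached to the Perron problem is $\hat P$, whose definition already involves $\mu$ and $\lambda$, so one has to first extract $\mu$ from the naive chain $P$ through a coupling or path-functional formula whose output is manifestly positive, and only then transfer the ergodic information over to $\hat P$ to obtain the simplicity of $\lambda$ and the spectral gap. The coupling-based projective contraction is the technical core on which everything else rests.
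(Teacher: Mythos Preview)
Your approach is sound but takes a genuinely different probabilistic route from the paper. You obtain the Perron vector by power iteration, using a Doeblin/Birkhoff-type contraction of $A$ on the projective cone to force convergence of $A^n v/\|A^n v\|_1$, and then pass to the $h$-transformed chain $\hat P$ to settle simplicity and the spectral gap via harmonic functions and a one-step contraction. The paper never iterates $A$ and never introduces $\hat P$: it works with first returns of the naive chain $P$, sets $\phi_i(\lambda)=E_i\big[\lambda^{-\tau_i}\prod_{n<\tau_i}S(X_n)\big]$, shows by a power-series analysis that $\phi_i$ decreases continuously from $+\infty$ to $0$, defines $\lambda_i$ by $\phi_i(\lambda_i)=1$, and writes down the (left) eigenvector as the explicit excursion functional $\mu_i(j)=E_i\big[\sum_{n<\tau_i}1_{\{X_n=j\}}\lambda_i^{-n}\prod_{k<n}S(X_k)\big]$; parts (ii)--(iii) then come from unfolding the eigen-equation for a competing pair $(\alpha,\nu)$ into the same series and comparing term by term. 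The payoff of the paper's route is precisely this closed excursion formula for $\mu$, generalizing $\mu(i)=1/E_i[\tau_i]$; your route is shorter and closer to standard ergodic arguments, but it does not produce that representation.

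Two small points on your sketch. First, the coupling you invoke is a coupling for $P$, whereas the objects you must contract are rows of $A^n$, which carry the multiplicative weight $\prod_k s_{X_k}$; what you are really using is the Birkhoff fact that a strictly positive matrix sends the positive cone into a set of bounded Hilbert projective diameter and hence contracts that metric, and the literal link to a coupling time of $P$ needs more care than ``express the projective distance in terms of the coupling time'' suggests. Second, in (iii) do not restrict to strictly positive eigenvectors: the substitution $w_j=\mu_j z_j$ and the $\hat P$-harmonicity argument work for an arbitrary (real or complex) eigenvector $w$, and that full generality is exactly what (iii) asserts.
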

This theorem was subsequently generalised by Frobenius in his work on non--negative matrices
in 1912, leading to the so--called Perron--Frobenius theorem \cite{SE}.
A myriad of mathematical models involve non--negative matrices and their powers,
thereby calling for the use of the Perron--Frobenius theorem.
Mathematicians have developed generalisations in several directions, notably
in infinite dimensions (for infinite matrices \cite{VJ}, for non--negative kernels
in arbitrary spaces \cite{AN}) and a whole Perron--Frobenius theory has emerged.
Hawkins wrote an historical account on the initial development of this theory \cite{HA}.
MacCluer \cite{MC} describes several applications of Perron's theorem and reviews the different proofs
that have been found over the years.
The original proof of Perron rested on an induction over the size of the matrix.
A few years later Perron found a proof involving the resolvent of the matrix.
A nowadays popular proof, which is found in most textbooks, is due to Wielandt
and it rests on a miraculous max--min functional.

We present an alternative proof of Perron's theorem, which is probabilistic in nature.
It rests on an auxiliary Markov chain, and the representation of the Perron eigenvector as
a functional of the trajectory of this Markov chain. This formula generalises the well--known
formula for the invariant probability measure of a finite state space Markov chain.
To ease the exposition, we restrict ourselves to the Perron theorem, and we work with matrices
whose entries are all positive. However our proof can be readily extended to primitive matrices,
thereby yielding the classical
Perron--Frobenius theorem.
Our proof might seem lengthy compared to other proofs, yet it is completely self--contained
and it requires only classical results of basic algebra and power series.

We introduce next some notation in order to define the auxiliary Markov chain. 
Let $d$ be a positive integer. 
Throughout the text, we consider a square 
matrix $A=(A(i,j))_{1\leq i,j\leq d}$ of size $d\times d$ with positive entries.
For $i\in\{\,1,\dots,d\,\}$, we denote by $S(i)$ the sum of the entries on the $i$--th row
of $A$, i.e.,
$$\forall i\in
\{\,1,\dots,d\,\}\qquad S(i)\,=\,\sum_{j=1}^{d}A(i,j)\,,$$
and we create a new
matrix 
$M=(M(i,j))_{1\leq i,j\leq d}$ 
by setting
$$\forall i,j\in
\{\,1,\dots,d\,\}\qquad 
M(i,j)\,=\,\frac{A(i,j)}{S(i)}\,.$$
Obviously, the sum of each row of $M$ is now equal to one, i.e., $M$ is stochastic,
and we think of it as the transition matrix of a Markov chain.
So, 
let $(X_n)_{n\in\N}$ be a Markov chain with state space
$\lbrace\,1,\dots,d\,\rbrace$ and transition matrix $M$.
Let us fix  $i\in\{\,1,\dots,d\,\}$.
We denote by $E_i$ the expectation of the Markov chain issued from $i$
and we introduce
the time 
$\tau_i$ 
of the first return of the chain to $i$, defined by
$$\tau_i\,=\,\inf\,\big\{\,n\geq 1:X_n=i\,\big\}\,.$$
Finally, we define a 
function $\phi_i$ by setting
$$\forall \lambda\geq 0\qquad
\phi_i(\lambda)\,=\,
E_i\Bigg(
\lambda^{-\tau_i}\prod_{n=0}^{\tau_i-1}S(X_n)
\Bigg)
\,.$$
The quantity in the expectation is non--negative, so the function $\phi_i$
is well defined and it might take infinite values.
In fact, the function $\phi_i$ can be written as a power series in the variable $1/\lambda$,
as follows:
\begin{multline*}
\phi_i(\lambda)\,=\,
\sum_{k=1}^{\infty}\frac{1}{\lambda^k}
E_i\Bigg(
1_{\{\tau_i=k\}}
\prod_{n=0}^{k-1}S(X_n)\Bigg)\,=\,\cr
\sum_{k=1}^{\infty}\frac{1}{\lambda^k}
\kern-3pt
\sum_{i_1,\dots,i_{k-1}\neq i}
\kern-7pt
S(i)S(i_1)\cdots S(i_{k-1})
\hfill\cr \hfill\times
P\big(X_1=i_1,\dots,X_{k-1}=i_{k-1},X_k=i\,|\,X_0=i\big)\cr
\,=\,
\sum_{k=1}^{\infty}\frac{1}{\lambda^k}
\sum_{i_1,\dots,i_{k-1}\neq i}
S(i)M(i,i_1)\cdots S(i_{k-1})M(i_{k-1},i)\cr
\,=\,
\sum_{k=1}^{\infty}\frac{1}{\lambda^k}
\sum_{i_1,\dots,i_{k-1}\neq i}
A(i,i_1)\cdots A(i_{k-1},i)\,.
\end{multline*}
Let $R$ be the radius of the convergence circle of this series, seen as a power series in the variable $1/\lambda$.
\begin{proposition}
\label{regf}
The function $\phi_i$ is continuous,
	decreasing on $]1/R,+\infty[$ and it satisfies
$$\lim_{\lambda\to 1/R\atop\lambda>1/R}\phi_i(\lambda)\,=\,+\infty\,,\qquad
	\phi_i(1/R)\,=\,+\infty\,,\qquad
\lim_{\lambda\to +\infty}\phi_i(\lambda)\,=\,0\,.$$
\end{proposition}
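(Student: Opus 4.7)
I would begin by partitioning the expectation defining $\phi_i$ according to the value of $\tau_i\geq 1$, obtaining the power series representation
\begin{equation*}
\phi_i(\lambda)\,=\,\sum_{n=1}^{\infty} a_n\lambda^{-n},
\qquad
a_n \,:=\, E_i\Big[\mathbf{1}_{\{\tau_i=n\}}\prod_{k=0}^{n-1}S(X_k)\Big]\,\geq\, 0.
\end{equation*}
Each map $\lambda\mapsto\lambda^{-n}$ is continuous, strictly decreasing on $\R^+$, with limits $+\infty$ at $0$ and $0$ at $+\infty$, so all four properties stated in the proposition should follow termwise from this decomposition, modulo standard control of the series. Note in particular that $a_1=M(i,i)\,S(i)=A(i,i)>0$, since every entry of $A$ is positive.

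\textbf{Monotonicity and limits.} For $0<\lambda_1<\lambda_2$ each coefficient satisfies $a_n\lambda_1^{-n}\geq a_n\lambda_2^{-n}$, with strict inequality whenever $a_n>0$; summing and using $a_1>0$ yields $\phi_i(\lambda_1)>\phi_i(\lambda_2)$. The lower bound $\phi_i(\lambda)\geq a_1/\lambda$ handles the limit at $0$. For the limit at $+\infty$, the crude estimate $a_n\leq S_{\max}^n$ with $S_{\max}:=\max_j S(j)$ yields $\phi_i(\lambda)\leq S_{\max}/(\lambda-S_{\max})$ for $\lambda>S_{\max}$, which tends to $0$ as $\lambda\to+\infty$.

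\textbf{Continuity, and the main obstacle.} At a point $\lambda_0$ lying strictly inside the set $\{\lambda>0:\phi_i(\lambda)<\infty\}$, continuity follows from dominated convergence: in a small neighborhood $[\lambda_0-\varepsilon,\lambda_0+\varepsilon]$ contained in the finiteness region, each term $a_n\lambda^{-n}$ is dominated by the summable $a_n(\lambda_0-\varepsilon)^{-n}$. On the (possibly empty) subinterval $(0,\lambda^*)$ where $\phi_i\equiv+\infty$, with $\lambda^*:=\inf\{\lambda:\phi_i(\lambda)<\infty\}$, continuity in the extended sense is automatic. The real difficulty is continuity at $\lambda^*$ itself when $\lambda^*>0$: monotone convergence yields $\lim_{\lambda\downarrow\lambda^*}\phi_i(\lambda)=\phi_i(\lambda^*)$, while the left limit is $+\infty$, so continuity forces $\phi_i(\lambda^*)=+\infty$, i.e.\ divergence of the power series at the boundary of its disk of convergence. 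This is the genuine obstacle, and I would address it by exploiting the strict positivity of $A$: writing $a_n=v^\top B^{n-2}w$ for $n\geq 2$, where $B$ is the $(d-1)\times(d-1)$ submatrix of $A$ obtained by deleting row $i$ and column $i$ and $v,w$ are strictly positive vectors, one argues that $a_n$ grows at precisely the rate $(\lambda^*)^n$, so the series indeed diverges at $\lambda^*$.
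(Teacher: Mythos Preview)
Your plan coincides with the paper's: expand $\phi_i$ as a power series in $1/\lambda$, read off monotonicity and the two limits termwise, observe that continuity is automatic except possibly at the abscissa $\lambda^*$ of convergence, and reduce the question to showing that the series diverges at $\lambda^*$. Your treatment of monotonicity and of the limits at $0$ and $+\infty$ is in fact more explicit than the paper's.

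The gap is in your last paragraph. Writing $a_n=v^\top B^{n-2}w$ with $v,w>0$ is correct, and it is also true that $a_n$ grows like $(\lambda^*)^n$; but the natural way to justify this growth statement is to invoke Perron's theorem for the smaller positive matrix $B$ (simple dominant eigenvalue with positive eigenvectors, hence $v^\top B^{n-2}w\sim c\,\rho(B)^{n}$ with $c>0$). Since Perron's theorem is precisely what the paper is proving, this would be circular. You need an argument for divergence at $\lambda^*$ that does not presuppose Perron--Frobenius for $B$.

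The paper closes this gap as follows. Using positivity, it sandwiches the coefficient $a_k$ between constant multiples of $\text{trace}(B^{k-1})=\gamma_1^{k-1}+\cdots+\gamma_{d-1}^{k-1}$, where $\gamma_1,\dots,\gamma_{d-1}$ are the (possibly complex) eigenvalues of $B$, ordered by modulus. Since these traces are nonnegative reals, one has $|S_n(\gamma_1)|\leq S_n(|\gamma_1|)$ for the partial sums $S_n(\lambda)=\sum_{k\leq n}\lambda^{-k}\,\text{trace}(B^{k})$. A one--line Ces\`aro computation on geometric series then gives $\tfrac{1}{n}S_n(\gamma_1)\to\#\{j:\gamma_j=\gamma_1\}\geq 1$, hence $S_n(|\gamma_1|)\to\infty$, which forces $\phi_i(|\gamma_1|)=+\infty$ and identifies $\lambda^*=|\gamma_1|$. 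This argument uses only the trace formula and elementary facts about geometric sums, so it avoids any appeal to Perron--Frobenius for $B$. Your framework is right; you just need to replace the hand--wave ``$a_n$ grows at precisely the rate $(\lambda^*)^n$'' by an argument of this type.
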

\begin{proof}
Since $A$ has positive entries,
the series contains non vanishing terms, and this implies that
$\phi_i$ is decreasing and tends to $\infty$ as $\lambda$ goes to~$1/R$ from above.
From classical results on power series, we know that $\phi_i(\lambda)$ is
continuous for $\lambda>1/R$.
Let us show that
$\phi_i(1/R)=+\infty$.
Let $B$ be the matrix obtained from $A$ by removing the $i$--th row and the $i$--th column and let
$\gamma_1,\dots,\gamma_{d-1}$ be its eigenvalues (possibly complex), arranged so that
$|\gamma_1|\geq\cdots
\geq |\gamma_d|$.
Let $m$ (respectively $M$) be the minimum (respectively the maximum) of the entries of $A$.
For any $k\geq 1$, we have
\begin{multline*}
\sum_{i_1,\dots,i_{k-1}\neq i}
A(i,i_1)\cdots A(i_{k-1},i)\,\geq\,
\frac{m^2}{M}
\sum_{i_1,\dots,i_{k-1}\neq i}
A(i_1,i_2)\cdots A(i_{k-1},i_1)
\cr
\,=\,
	\frac{m^2}{M} \text{trace}(B^{k-1})
\,=\,
	\frac{m^2}{M} \Big(\gamma_1^{k-1}+\cdots+\gamma_{d-1}^{k-1}\Big)
\,.
\end{multline*}
Although the eigenvalues
	$\gamma_1,\dots,\gamma_{d-1}$ might be complex numbers, the trace of $B^{k-1}$
is a positive real number.
Similarly, we have
$$
\sum_{i_1,\dots,i_{k-1}\neq i}
A(i,i_1)\cdots A(i_{k-1},i)\,\leq\,
	\frac{M^2}{m} \Big(\gamma_1^{k-1}+\cdots+\gamma_{d-1}^{k-1}\Big)
\,.
$$
From the two previous inequalities, we conclude that the power series defining $\phi_i$
converges
if and only if the series
$$\sum_{k=0}^{\infty}\frac{1}{\lambda^k}
\Big(\gamma_1^k+\cdots+\gamma_{d-1}^k\Big)
$$
converges.
This is certainly the case if $|\lambda|>|\gamma_1|$, therefore $R\geq 1/|\gamma_1|$.
Let us define, for $n\geq 1$,
$$S_n(\lambda)\,=\,
\sum_{k=0}^{n}\frac{1}{\lambda^k}
\Big(\gamma_1^k+\cdots+\gamma_{d-1}^k\Big)\,.
$$
We shall rely on the following result on geometric series.
\begin{lemma}
\label{geo}
Let $z$ be a complex number such that $|z|\leq 1$. Then
$$
\lim_{n\to\infty}\,\,
\frac{1}{n}\big({1+z+\cdots+z^n}\big)\,=\,
\begin{cases}
\quad0&\quad\text{if}\quad z\neq 1\,,\\
\quad1&\quad\text{if}\quad z=1\,.
\end{cases}
$$
\end{lemma}
\begin{proof} For $z=1$, the result is obvious. For $z\neq 1$, we compute
$$\frac{1}{n}\big({1+z+\cdots+z^n}\big)\,=\,
\frac{1-z^{n+1}}{n(1-z)}\,,$$
and we observe that this quantity goes to $0$ when $n$ goes to $\infty$.
\end{proof}

\noindent
Lemma~\ref{geo} implies that, for $\lambda$ a complex number such that $|\lambda|=|\gamma_1|$,
$$\lim_{n\to\infty}\,\,
\frac{1}{n}S_n(\lambda)\,=\,
\card\big\{\,j:1\leq j\leq d,\,\lambda=\gamma_j\,\big\}\,.$$
This implies in particular that
$$\lim_{n\to\infty}\,\,
\big|S_n(\gamma_1)\big|\,=\,+\infty\,.$$
Observing that
$\big|S_n(\gamma_1)\big|
\leq
S_n(|\gamma_1|)$,
we conclude that 
$$\phi_i(|\gamma_1|)\,=\,\lim_{n\to\infty}\,\,
S_n(|\gamma_1|)\,=\,+\infty\,.$$
Therefore $R=1/|\gamma_1|$ and moreover
$\phi_i(1/R)=+\infty$.
\end{proof}

\noindent
Proposition~\ref{regf} implies that
$\phi_i$ is one to one from $]1/R,+\infty[$ onto $]0,+\infty[$, thus
there exists a unique positive 
real number $\lambda_i$ such that $\phi_i(\lambda_i)=1$.
The next result is the key to our proof of
the Perron--Frobenius theorem.
We define a vector $\mu_i$ by setting
$$\forall j\in \{\,1,\dots,d\,\}\qquad
\mu_i(j)\,=\,
\displaystyle
E_i\Bigg(\sum_{n=0}^{\tau_i-1}\Big(
1_{\{X_n=j\}}
\lambda_i^{-n}\prod_{k=0}^{n-1}S(X_k)
\Big)
\Bigg)
\,.
$$
\begin{theorem}
The value $\lambda_i$ is an eigenvalue of $A$ and
the vector $\mu_i$ is an associated left eigenvector whose components are all positive and finite.
\end{theorem}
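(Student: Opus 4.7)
The plan is to verify three things: that $\mu_i(i)=1$, that each $\mu_i(j)$ is positive and finite, and that the left eigenvector equation $\mu_i A=\lambda_i\mu_i$ holds. The value $\mu_i(i)=1$ follows directly from the definition, because the first return time $\tau_i$ prevents the chain from revisiting $i$ strictly before time $\tau_i$, so only the index $n=0$ contributes to the sum defining $\mu_i(i)$, and the corresponding term is $\lambda_i^0$ times the empty product, which equals $1$.

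Next I would establish finiteness and positivity. Expanding $\mu_i(j)$ as a series exactly as was done for $\phi_i$ in the proof of Proposition~\ref{regf}, one obtains
\begin{equation*}
\mu_i(j)\,=\,\sum_{k=1}^{\infty}\frac{1}{\lambda_i^k}\sum_{i_1,\dots,i_{k-1}\neq i}A(i,i_1)A(i_1,i_2)\cdots A(i_{k-1},j)\,,
\end{equation*}
with the natural interpretation that $\mu_i(i)=1$ corresponds to taking $k=0$. Using $A(i_{k-1},j)\leq (M/m)A(i_{k-1},i)$, where $m$ and $M$ are the minimum and maximum entries of $A$, one dominates the series for $\mu_i(j)$ by $(M/m)\,\phi_i(\lambda_i)=M/m$, giving finiteness. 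Positivity follows because every term is non-negative and the $k=1$ term contributes $\lambda_i^{-1}A(i,j)>0$ since $A$ has strictly positive entries.

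The heart of the proof is the eigenvector identity. I would compute $\sum_j\mu_i(j)A(j,l)$ by writing $A(j,l)=S(j)M(j,l)$ and pushing the factor $S(X_n)\,M(X_n,l)$ inside the expectation, so that
\begin{equation*}
\sum_j\mu_i(j)A(j,l)\,=\,E_i\Bigg(\sum_{n=0}^{\tau_i-1}\lambda_i^{-n}M(X_n,l)\prod_{k=0}^{n}S(X_k)\Bigg)\,.
\end{equation*}
Writing the sum as $\sum_{n\geq 0}1_{\tau_i>n}$ and conditioning on $\cF_n=\sigma(X_0,\dots,X_n)$, the Markov property replaces $M(X_n,l)$ by $1_{\{X_{n+1}=l\}}$; after reindexing $m=n+1$, this yields
\begin{equation*}
\sum_j\mu_i(j)A(j,l)\,=\,\lambda_i\sum_{m=1}^{\infty}\lambda_i^{-m}E_i\Bigg(1_{\{\tau_i\geq m,\,X_m=l\}}\prod_{k=0}^{m-1}S(X_k)\Bigg)\,.
\end{equation*}
Now the key dichotomy appears: if $l\neq i$, then $\{X_m=l,\tau_i\geq m\}=\{X_m=l,\tau_i>m\}$, and the right-hand side is precisely $\lambda_i\mu_i(l)$; if $l=i$, then $\{X_m=i,\tau_i\geq m\}=\{\tau_i=m\}$, and the sum becomes $\lambda_i\phi_i(\lambda_i)=\lambda_i=\lambda_i\mu_i(i)$.

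The main obstacle I foresee is the bookkeeping with the indicator $1_{\tau_i>n}$ and the reindexing of the product $\prod_k S(X_k)$, where an off-by-one between $\prod_{k=0}^{n-1}$ and $\prod_{k=0}^{n}$ must be handled carefully. A secondary care point is justifying the interchange of expectation and the infinite sum, which is immediate because all terms are non-negative (Tonelli), given the finiteness bound established above.
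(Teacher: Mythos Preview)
Your proposal is correct and follows essentially the same route as the paper: the core eigenvector computation (writing $A(j,l)=S(j)M(j,l)$, using the Markov property to replace $M(X_n,l)$ by $1_{\{X_{n+1}=l\}}$, reindexing, and splitting into the cases $l\neq i$ and $l=i$ via $\phi_i(\lambda_i)=1$) is identical. The only minor difference is that you establish finiteness of $\mu_i(j)$ beforehand by a direct $M/m$ comparison with the series for $\phi_i(\lambda_i)$, whereas the paper first derives the identity $\mu_i A=\lambda_i\mu_i$ (valid for non-negative, possibly infinite quantities by Tonelli) and then reads off finiteness and positivity from $\mu_i(i)=1$.
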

\begin{proof}
Let us note $E_i,\tau_i,\lambda_i,\mu_i$ simply by $E,\tau,\lambda,\mu$.
	Let us compute, for $k\in\{\,1,\dots,d\,\}$,
\begin{align*}
\sum_{j=1}^d
\mu(j) &A(j,k)\,=\,
\sum_{j=1}^d
\mu(j) S(j)M(j,k)\cr
&\,{=}\,\sum_{j=1}^d\sum_{n\geq 0}
E\Bigg(1_{\{\tau>n\}}
\lambda^{-n}
\Big(
\prod_{t=0}^{n-1}S(X_t)
\Big)
1_{\{X_n=j\}}
f(j)M(j,k)
\Bigg)\cr
&=\,\sum_{j=1}^d\sum_{n\geq 0}
E\Bigg(1_{\{\tau>n\}}
\lambda^{-n}
\Big(
\prod_{t=0}^{n}S(X_t) \Big)
1_{\{X_n=j\}}
1_{\{X_{n+1}=k\}}
\Bigg)\cr
&=\,
E\Bigg(\sum_{n=0}^{\tau-1}
1_{\{X_{n+1}=k\}}
\lambda^{-n}
\Big(
\prod_{t=0}^{n}S(X_t)
\Big)
\Bigg)\cr
&=\,
\lambda\,E\Bigg(\sum_{n=1}^{\tau}
1_{\{X_{n}=k\}}
\lambda^{-n}
\Big(
\prod_{t=0}^{n-1}S(X_t)
\Big)
\Bigg)\,.
\end{align*}
Suppose that $k\neq i$. Then the term in the last sum vanishes 
for $n=0$ or $n=\tau$, and we obtain
$$
\sum_{j=1}^d 
\mu(j) A(j,k)\,=\,
\lambda \mu(k)\,.$$
For $k=i$, the only non--vanishing term in the expectation corresponds to $n=\tau$ and we obtain, noticing that $\mu(i)=1$,
$$
\sum_{j=1}^d 
\mu(j) A(j,i)\,=\,
\lambda\,E\Bigg(
\lambda^{-\tau}
\prod_{t=0}^{\tau-1}S(X_t)
\Bigg)
\,=\,
\lambda\,\phi_i(\lambda)\,=\,
\lambda\,\mu(i)\,.
$$
Thus we have proved that
$\mu A=\lambda\mu$.
Since $\mu(i)=1$, 
these equations
imply that 
$\mu(1),\dots,\mu(d)$ are all positive and finite. 
\end{proof}
\begin{proposition}
\label{regu}
Let $\alpha$ be an eigenvalue of $A$, possibly complex, and let
$\nu$ be an associated left eigenvector. 
Let $i \in\{\,1,\dots,d\,\}$ be such that $\nu(i)\neq 0$.
Either 
$\nu$ and $\mu_i$ are proportional (in which case 
$\alpha=\lambda_i$) or
$|\alpha|<\lambda_i$.
\end{proposition}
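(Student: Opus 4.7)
The plan rests on two observations. First, every eigenvalue $\alpha$ of $A$ with $|\alpha|>R$ (where $R=|\gamma_1|$ comes from the proof of Proposition \ref{regf}) necessarily satisfies $\phi_i(\alpha)=1$, because one can reconstruct $\nu$ from $\nu(i)$ using a resolvent of the submatrix $B$ of $A$ obtained by removing the $i$-th row and column. Second, since the power series defining $\phi_i$ has strictly positive coefficients, the triangle inequality lets us compare $|\alpha|$ with $\lambda_i$ and forces rigidity in the equality case.

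I dispatch the easy case $|\alpha|\leq R$ immediately: $\phi_i(\lambda_i)=1$ together with $\phi_i(R)=+\infty$ yields $\lambda_i>R\geq|\alpha|$, hence $|\alpha|<\lambda_i$. For $|\alpha|>R$, I split the eigenvector equation $\nu A=\alpha\nu$ at the index $i$. Let $B=(A(j,k))_{j,k\neq i}$, let $a=(A(i,k))_{k\neq i}$ be a row vector, $c=(A(j,i))_{j\neq i}$ be a column vector, and $\nu'$ the restriction of $\nu$ to indices $\neq i$. The equations for $k\neq i$ read $\nu'(\alpha I-B)=\nu(i)\,a$. Since $|\alpha|>R$ exceeds the spectral radius of $B$, the matrix $\alpha I-B$ is invertible and $\nu'=\nu(i)\,a\,(\alpha I-B)^{-1}$. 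Substituting into the equation for $k=i$ and dividing by $\nu(i)\neq 0$ gives
$$\alpha\,=\,A(i,i)+a\,(\alpha I-B)^{-1}\,c\,.$$
Expanding $(\alpha I-B)^{-1}=\sum_{k\geq 0}\alpha^{-k-1}B^k$, which converges absolutely because $|\alpha|>|\gamma_1|$, and dividing through by $\alpha$, I recover exactly the power series for $\phi_i$ derived in the proof of Proposition \ref{regf}. This shows $\phi_i(\alpha)=1$.

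The step I expect to be the main obstacle is extracting both the inequality and the rigidity statement from $\phi_i(\alpha)=1$. Write $\phi_i(\lambda)=\sum_{k\geq 1}c_k\lambda^{-k}$ with $c_k>0$ for every $k$, since each $c_k$ is a sum of products of positive entries of $A$. The triangle inequality gives
$$1\,=\,|\phi_i(\alpha)|\,\leq\,\sum_{k\geq 1}c_k|\alpha|^{-k}\,=\,\phi_i(|\alpha|)\,,$$
and the monotonicity of $\phi_i$ with $\phi_i(\lambda_i)=1$ yields $|\alpha|\leq\lambda_i$. If $|\alpha|=\lambda_i$, the inequality is saturated, forcing the terms $c_k\alpha^{-k}$ to share the argument of the positive real $\phi_i(\alpha)=1$; since $c_k>0$, this means $\alpha^{-k}$ is a positive real for every $k$, and already $k=1$ shows that $\alpha$ equals $|\alpha|=\lambda_i$. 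For the proportionality, the relation $\nu'=\nu(i)\,a\,(\lambda_i I-B)^{-1}$ reconstructs $\nu$ from $\nu(i)$, and applying the same relation to $\mu_i$ (for which $\mu_i(i)=1$) yields $\nu=\nu(i)\,\mu_i$.
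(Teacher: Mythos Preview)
Your argument is correct. Both you and the paper arrive at the identity $\sum_{k\geq 1}c_k\alpha^{-k}=1$ (with $c_k>0$), but by different routes: the paper substitutes the eigenvector relation into itself $n$ times and then controls a remainder term using the convergence of $\phi_i(|\alpha|)$, whereas you split $A$ into the block $B$, the row $a$ and the column $c$, invert $\alpha I-B$ directly (legitimate since $|\alpha|>R=\rho(B)$), and read off the identity from the Neumann series. These are two packagings of the same computation, yours being the closed-form version of the paper's iteration.

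The treatment of the boundary case $|\alpha|=\lambda_i$ is where the two proofs genuinely diverge. The paper runs a second iterative argument to obtain the componentwise bound $|\nu|\leq\mu_i$, and then uses the eigenvector equation for $\mu_i$ together with positivity of the entries of $A$ to force $|\nu|=\mu_i$ and finally $\nu=\mu_i$. You instead exploit rigidity in the triangle inequality for the absolutely convergent series $\sum c_k\alpha^{-k}$: equality with a positive real sum and strictly positive $c_k$ forces each $\alpha^{-k}$ to be a positive real, hence $\alpha=\lambda_i$; proportionality of $\nu$ and $\mu_i$ then drops out of your resolvent formula $\nu'=\nu(i)\,a(\lambda_i I-B)^{-1}$, since the same formula reconstructs $\mu_i'$ from $\mu_i(i)=1$. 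Your route is shorter and makes the uniqueness of the eigenvector transparent; the paper's route stays closer to the probabilistic spirit and avoids invoking the spectral radius of $B$ or the resolvent explicitly. One small caveat: your claim that $c_k>0$ for \emph{every} $k\geq 1$ uses $d\geq 2$ (for $d=1$ one has $c_k=0$ for $k\geq 2$), but in that case the statement is trivial and your argument for $k=1$ already suffices.
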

\begin{proof}
Let $\alpha,\nu$ and $i$ be as in the statement of the proposition.
We suppose that $\alpha\neq 0$,
otherwise there is nothing to prove. Let $\nu$ be an associated left eigenvector. 
We have 
$$
\forall k\in\{\,1,\dots,d\,\}\qquad
\nu(k)
\,=\,
\frac{1}{\alpha}
\sum_{j=1}^d 
\nu(j) A(j,k)
\,.
$$
Let us focus on
the equation for $k=i$. We divide by $\nu(i)$ (which is assumed to be non zero) 
and
we isolate the term $j=i$ in the sum to obtain
$$
1\,=\,
\frac{1}{\alpha}
 A(i,i)+
\frac{1}{\alpha}
\sum_{j\neq i} 
\frac{\nu(j)}{\nu(i)} A(j,i)
\,.
$$
We expand $\nu(j)$ in the above equation as a sum, and we get
\begin{multline*}
1
\,=\,
\frac{1}{\alpha}
 A(i,i)+
\frac{1}{\alpha^2}
\sum_{j\neq i} 
\sum_{j'} 
\frac{\nu(j')}{\nu(i)}
 A(j',j)A(j,i)
\cr
\,=\,
\frac{1}{\alpha}
 A(i,i)+
\frac{1}{\alpha^2}
\sum_{j\neq i} 
 A(i,j)A(j,i)
+
\frac{1}{\alpha^2}
\sum_{j\neq i} 
\sum_{j'\neq i} 
\frac{\nu(j')}{\nu(i)}
 A(j',j)A(j,i)
\,.
\end{multline*}
Iterating $n$ times this procedure, we get
\begin{multline*}
1
\,=\,
\frac{1}{\alpha}
 A(i,i)+\cdots+
\frac{1}{\alpha^{n+1}}
\sum_{i_1,\dots,i_{n}\neq i} 
 A(i,i_1)
 A(i_1,i_2)\cdots
 A(i_{n},i)
\cr
+
\frac{1}{\alpha^{n+1}}
\sum_{i_0,i_1,\dots,i_{n}\neq i} 
\frac{\nu(i_0)}{\nu(i)}
 A(i_0,i_1)
 A(i_1,i_2)\cdots
 A(i_{n},i)
\,.
\end{multline*}
If $\phi_i(|\alpha|)=+\infty$, then it follows from proposition~\ref{regf} and
the definition of $\lambda_i$ that
$|\alpha|\leq 1/R<\lambda_i$ and we are done.
From now onwards, we suppose that 
$\phi_i(|\alpha|)<+\infty$.
Just before 
proposition~\ref{regf}, 
we worked out a power series expansion of $\phi_i$. The convergence of this series
at $|\alpha|$ implies in particular that the general
term of this series goes to $0$, hence
$$
\lim_{n\to\infty}\,\,
\frac{1}{\alpha^{n+1}}
\sum_{i_1,\dots,i_{n}\neq i} 
 A(i,i_1)
 A(i_1,i_2)\cdots
 A(i_{n},i)
\,=\,0
\,.
$$
Let $m$ (respectively $M$) be the minimum (respectively the maximum) of the entries of $A$.
For any $i_0\neq i$, we have
\begin{multline*}
\sum_{i_1,\dots,i_{n}\neq i} 
 A(i_0,i_1)
 A(i_1,i_2)\cdots
 A(i_{n},i)
\,\leq\,
\cr
\frac{M}{m}
\sum_{i_1,\dots,i_{n}\neq i} 
 A(i,i_1)
 A(i_1,i_2)\cdots
 A(i_{n},i)\,.
\end{multline*}
It follows that, for any $n\geq 1$,
\begin{multline*}
\frac{1}{|\alpha|^{n+1}}
\sum_{i_0,i_1,\dots,i_{n}\neq i} 
\frac{\nu(i_0)}{\nu(i)}
 A(i_0,i_1)
 A(i_1,i_2)\cdots
 A(i_{n},i)
\,\leq\,\cr
\frac{Md
\max_{1\leq j\leq d}|\nu(j)|
}{m|\nu(i)|}
\frac{1}{|\alpha|^{n+1}}
\sum_{i_1,\dots,i_{n}\neq i} 
 A(i,i_1)
 A(i_1,i_2)\cdots
 A(i_{n},i)
\end{multline*}
and we conclude from the previous inequality that
$$
\lim_{n\to\infty}\,\,
\frac{1}{\alpha^{n+1}}
\sum_{i_0,i_1,\dots,i_{n}\neq i} 
\frac{\nu(i_0)}{\nu(i)}
 A(i_0,i_1)
 A(i_1,i_2)\cdots
 A(i_{n},i)
\,=\,0\,.$$
We send now $n$ to $\infty$ in the identity and we get
$$
1
\,=\,
\frac{1}{\alpha}
 A(i,i)+\sum_{n=1}^{+\infty}
\frac{1}{\alpha^{n+1}}
\sum_{i_1,\dots,i_{n}\neq i} 
 A(i,i_1)
 A(i_1,i_2)\cdots
 A(i_{n},i)
\,.$$
Recall that $\alpha$ might be complex. Taking the modulus, we conclude
that $\phi_i(|\alpha|)\geq 1=\phi_i(\lambda_i)$, and since $\phi_i$ is decreasing, then $|\alpha|\leq\lambda_i$.
It remains to examine the case 
$|\alpha|=\lambda_i$.
We suppose that the eigenvector $\nu$ associated to $\alpha$
is normalised so that $\nu(i)=1$.
We denote by $|\nu|$ the vector whose coordinates are the modulus of the coordinates of
$\nu$, i.e., 
$|\nu|(j)=
|\nu(j)|$ for $1\leq j\leq d$.
Since $\nu A=\alpha\nu$ and the entries of $A$ are positive, then
$$
\forall k\in\{\,1,\dots,d\,\}\qquad
|\nu|(k)
\,\leq\,
\frac{1}{\lambda_i}
\sum_{j=1}^d 
|\nu|(j) A(j,k)
\,.
$$
Starting from this inequality, 
we proceed as previously,
that is, we isolate the term corresponding
to $j=i$ in the sum, we bound from above the term $|\nu(j)|$ for $j\neq i$ with the help of
the above inequality, 
and we iterate the procedure $n$ times. We check that the ultimate term goes to $0$
when we send $n$ to $\infty$, and we get the inequality 
$$
\forall k\in\{\,1,\dots,d\,\}\qquad
|\nu|(k)
\,\leq\,
\mu_i(k)\,.$$
For $k\in\{\,1,\dots,d\,\}$,
we have
$$
{\lambda_i}
|\nu|(k)\,=\,
\Big|
\sum_{j=1}^d 
\nu(j) A(j,k)
\Big|
\,\leq\,
\sum_{j=1}^d 
|\nu|(j) A(j,k)
\,.
$$
It follows that
$$
\sum_{k=1}^d 
\big(\mu_i(k)
-
|\nu|(k)\big)
A(k,i)\,\leq\,
\lambda_i\,\big(\mu(i)-\nu(i)\big)\,=\,0\,.
$$
This equation implies that
$\mu_i
=
|\nu|$ and that all the intermediate inequalities were in fact 
equalities. Since all the entries of $A$ are positive and $\nu(i)=1$, then
necessarily all the components of $\nu$ are non--negative real numbers and
$\nu=\mu_i$ and $\alpha=\lambda_i$.
\end{proof}

\noindent
The $\lambda_i$'s are positive eigenvalues of $A$, the eigenvectors $\mu_i$ have positive
coordinates, thus proposition~\ref{regu} readily
implies the following result.
\begin{corollary}
The values $\lambda_1,\dots,\lambda_d$ are all equal. 
Their common value $\lambda$ is a simple eigenvalue of $A$.
The eigenvectors $\mu_1,\dots,\mu_d$ are proportional.
\end{corollary}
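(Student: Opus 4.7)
The plan is to derive the corollary as a direct consequence of Proposition~\ref{regu}, applied in a symmetric way across the indices $i,j\in\{1,\dots,d\}$. For each ordered pair $(i,j)$, I would take the real eigenvalue $\alpha=\lambda_j$ and its associated left eigenvector $\nu=\mu_j$ (which has all positive coordinates by the preceding theorem). Then $\mu_j(i)>0$, so Proposition~\ref{regu} is applicable at index $i$ and yields one of two alternatives: either $\mu_j$ is proportional to $\mu_i$ with $\lambda_j=\lambda_i$, or $\lambda_j<\lambda_i$.

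Next I would swap the roles of $i$ and $j$ and run the same argument at $\lambda_i,\mu_i$ with index $j$, producing analogously either $\mu_i\propto\mu_j$ with $\lambda_i=\lambda_j$, or $\lambda_i<\lambda_j$. The two strict inequalities are incompatible, so the first alternative must hold in both cases. Since $i,j$ were arbitrary, all the $\lambda_i$'s coincide with a common positive value $\lambda$ and all the $\mu_i$'s are mutually proportional.

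For the simplicity of $\lambda$, I would let $\nu$ be any (possibly complex) left eigenvector of $A$ associated with $\lambda$, pick an index $i$ with $\nu(i)\neq 0$, and invoke Proposition~\ref{regu} once more: the second alternative $|\lambda|<\lambda_i=\lambda$ is absurd, so $\nu$ must be proportional to $\mu_i$. This shows the left eigenspace of $\lambda$ is one-dimensional, i.e.\ $\lambda$ is a simple eigenvalue.

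There is no real obstacle; the whole content of the corollary is packaged into Proposition~\ref{regu}, and the only thing to do is to exploit the symmetry between $(i,\lambda_i,\mu_i)$ and $(j,\lambda_j,\mu_j)$ and the positivity of all coordinates of the $\mu_k$'s. The one point to remain careful about is that the dichotomy in Proposition~\ref{regu} presupposes $\nu(i)\neq 0$, which is automatic here because $\mu_j(i)>0$ and because any nonzero eigenvector of $\lambda$ has at least one nonvanishing coordinate.
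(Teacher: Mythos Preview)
Your proposal is correct and is exactly the argument the paper has in mind: the paper itself gives no detailed proof of the corollary beyond the sentence ``the $\lambda_i$'s are positive eigenvalues of $A$, the eigenvectors $\mu_i$ have positive coordinates, thus proposition~\ref{regu} readily implies the following result,'' and your write-up is a faithful unpacking of that sentence. The symmetry trick (apply Proposition~\ref{regu} to $(\lambda_j,\mu_j)$ at index $i$ and then to $(\lambda_i,\mu_i)$ at index $j$) and the simplicity argument via an arbitrary eigenvector $\nu$ with $\nu(i)\neq 0$ are precisely what is needed; note that here, as in the paper's statement of Perron's theorem (item~iii), ``simple'' is used in the geometric sense.
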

Finally,
we normalise these eigenvectors by imposing that the sum of the components
is equal to~$1$, thereby getting a probability distribution.
\begin{corollary}
The left Perron--Frobenius eigenvector $\mu$ of $A$
is given by 
$$
\forall i\in\{\,1,\dots,d\,\}\qquad
\mu(i)\,=\,\frac{1}{
\displaystyle
E_i\Bigg(\sum_{n=0}^{\tau_i-1}\Big(
\lambda^{-n}\prod_{t=0}^{n-1}S(X_t)
\Big)
\Bigg)
}\,.
$$
\end{corollary}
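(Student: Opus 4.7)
The plan is to read off the formula by combining the previous Corollary with a direct computation of $\mu_i(i)$ and of $\sum_{j=1}^d \mu_i(j)$.

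First, I would observe that $\mu_i(i) = 1$. Indeed, in the defining sum for $\mu_i(i)$, the indicator $1_{\{X_n=i\}}$ forces $n = 0$ (since for $1 \leq n \leq \tau_i - 1$ the chain is by definition not at $i$), and the $n=0$ term equals $\lambda_i^0$ times the empty product, which is $1$. So each $\mu_i$ is already normalized to take value $1$ at its base point.

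Next, since the previous Corollary asserts that $\mu_1,\dots,\mu_d$ are proportional and share the common eigenvalue $\lambda$, the probability vector $\mu$ obtained by normalizing so that $\sum_{j=1}^d \mu(j) = 1$ satisfies, for each $i$,
$$
\mu(i) \,=\, \frac{\mu_i(i)}{\sum_{j=1}^d \mu_i(j)} \,=\, \frac{1}{\sum_{j=1}^d \mu_i(j)}\,.
$$

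The remaining step is to compute the denominator. Exchanging the (finite under $E_i$) sum over $j$ with the expectation and using $\sum_{j=1}^d 1_{\{X_n = j\}} = 1$, I would obtain
$$
\sum_{j=1}^d \mu_i(j) \,=\, E_i\Bigg(\sum_{n=0}^{\tau_i - 1} \lambda^{-n}\prod_{t=0}^{n-1} S(X_t)\Bigg)\,,
$$
which, combined with the previous display, yields the claimed formula. There is no genuine obstacle here; the only point requiring a little care is the verification that $\mu_i(i) = 1$, i.e., that the pre-normalization already pins down the value $1$ at $i$, so that dividing by the total mass of $\mu_i$ directly produces the desired probability distribution.
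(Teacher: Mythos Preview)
Your argument is correct and matches the paper's (largely implicit) derivation: the paper already records $\mu_i(i)=1$ in the proof of the Theorem, and then obtains the formula simply by normalizing $\mu_i$ so that the components sum to~$1$, exactly as you do. The only additional remark is that the exchange of $\sum_j$ with $E_i$ is immediate here since all terms are non--negative (or, equivalently, since each $\mu_i(j)$ has already been shown to be finite).
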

This formula is a generalisation of the classical formula for
the invariant probability measure of a Markov chain.
Indeed, in the particular case where $A$ is stochastic,
$S$ is constant equal to 1, $\lambda$ is also equal to 1,
and the formula of the corollary becomes 
$$
\forall i\in\{\,1,\dots,d\,\}\qquad
\mu(i)\,=\,\frac{1}{
\displaystyle
E_i(\tau_i)
}\,,$$
a formula 
well--known among probabilists.
\noindent

\bibliographystyle{plain}
\bibliography{pamm}
 \thispagestyle{empty}

\end{document}